\theoremstyle{plain}
 \newtheorem{theorem}{Theorem}[section]
 \newtheorem{lemma}{Lemma}[section]
 \newtheorem{corollary}{Corollary}[section]
 \newtheorem{example}{Example}[section]
\theoremstyle{remark}
 \numberwithin{equation}{section}
\renewcommand{\leq}{\leqslant}
\renewcommand{\geq}{\geqslant}
\title[On order bounded subsets of locally solid Riesz spaces]{On order bounded subsets of locally solid Riesz spaces}
\subjclass[2010]{Primary 46A40; 06F30. }
\keywords{Locally solid Riesz spaces; order bounded sets; topologically bounded sets; pseudometrizability.}
\author[Hong]{\bfseries Liang Hong}
\address{
Department of Mathematics \\ % \hfill (Received 00 00 2010)\\
Robert Morris University   \\ %\hfill (Revised  00 00 2010)\\
Moon, PA 15108, USA}
\email{hong@rmu.edu}
\begin{document}

%{\begin{flushleft}\baselineskip9pt\scriptsize
%PUBLICATIONS DE L'INSTITUT MATH\'EMATIQUE\newline
%Nouvelle s\'erie, tome 87(101) (2010), od--do \hfill DOI:
%\end{flushleft}}
\vspace{18mm}
\setcounter{page}{1}
\thispagestyle{empty}

%\begin{center}
%Revision Submitted to \emph{Quaestiones Mathematicae}\\
%Manuscript ID: TQMA-2014-0244
%\end{center}

\begin{abstract}
In a topological Riesz space there are two types of bounded subsets: order bounded subsets and topologically bounded subsets.
It is natural to ask (1) whether an order bounded subset is topologically bounded and (2) whether a topologically bounded subset is order bounded. A classical result gives a partial answer to (1) by saying that an order bounded subset of a locally solid Riesz space is topologically bounded. This paper attempts to further investigate these two questions. In particular, we show that (i) there exists a non-locally solid topological Riesz space in which every order bounded subset is topologically bounded; (ii) if a topological Riesz space is not locally solid, an order bounded subset need not be topologically bounded; (iii) a topologically bounded subset need not be order bounded even in a locally convex-solid Riesz space. Next, we show that (iv) if a locally solid Riesz space has an order bounded topological neighborhood of zero, then every topologically bounded subset is order bounded; (v) however, a locally convex-solid Riesz space may not possess an order bounded topological neighborhood of zero even if every topologically bounded subset is order bounded; (vi) a pseudometrizable locally solid Riesz space need not have an order bounded topological neighborhood of zero. In addition, we give some results about the relationship between order bounded subsets and positive homogeneous operators.
\end{abstract}

\maketitle

\section{Notation and basic concepts}  %% Please avoid complicated formulas in titles
For notation, terminology and standard results concerning topological vector spaces, we refer to \cite{B1}, \cite{H1}, \cite{NB} and \cite{Schaefer}; for notation, terminology and standard results concerning Riesz spaces, we refer to \cite{AB1}, \cite{AB2} and \cite{LZ}. A partially ordered set $X$ is called a \emph{lattice} if the infimum and supremum of any pair of elements in $X$ exist. A real vector space $X$ is called an \emph{ordered vector space} if its vector space structure is compatible with the order structure in a manner such that
\begin{enumerate}
  \item [(a)]if $x\leq y$, then $x+z\leq y+z$ for any $z\in X$;
  \item [(b)]if $x\leq y$, then $\alpha x\leq \alpha y$ for all $\alpha\geq 0$.
\end{enumerate}
An ordered vector space is called a \emph{Riesz space} (or a \emph{vector lattice}) if it is also a lattice at the same time. A subset $Y$ of a Riesz space $X$ is said to be \emph{solid} if $|x| \leq |y|$ and $y\in Y$ imply that $x\in Y$. A subset $A$ of a topological vector space $(L, \tau)$ is said to be \emph{topologically bounded} or \emph{$\tau$-bounded} if for every neighborhood $V$ of zero there exists some $\lambda>0$ such that $\lambda E\subset V$. A topological vector space $(L, \tau)$ is said to be \emph{locally convex} if it has a neighborhood base at zero consisting of convex sets. A locally convex topological vector space is said to be \emph{seminormable} (\emph{normable}) if it can be generated by a single seminorm (norm). An \emph{ordered topological vector space} is an ordered vector space equipped with a compatible vector topology. A \emph{topological Riesz space} is an ordered topological vector space which is a Riesz space at the same time. A vector topology $\tau$ on a Riesz space $L$ is said to be \emph{locally solid} if there exists a $\tau$-neighborhood base at zero consisting of solid sets. A \emph{locally solid Riesz space} is a Riesz space equipped with a locally solid vector topology. A seminorm $\rho$ on a Riesz space $L$ is called a \emph{Riesz seminorm} if $|x|\leq |y|$ implies $\rho(x)\leq \rho(y)$ for any two elements $x, y\in L$. A vector topology on a Riesz space is said to be \emph{locally convex-solid} if it is both locally solid and locally convex. A \emph{locally convex-solid Riesz space} is a Riesz space equipped with a locally convex-solid vector topology.

Let $T$ be a linear operator between two ordered topological vector spaces $(L_1, \tau_1)$ and $(L_2, \tau_2)$.
$T$ is said to be a \emph{positive operator} if it carries positive elements to positive elements; it is said to be \emph{positive homogeneous} if $T(\lambda x)=\lambda T(x)$ for all $x\in L_1$ and $\lambda>0$; it is said to be an \emph{order bounded operator} if it carries order bounded sets to order bounded sets; it is said to be   \emph{topologically continuous} if $T^{-1}(O)\in \tau_1$ for every open set $O\in \tau_2$; it is said to be \emph{order continuous} if the net $(T(x_{\alpha}))$ is order convergent for every order-convergent net $(x_{\alpha})$ in $L_1$.

The terminology ``locally solid Riesz space'' seems to first appear in \cite {N1}, although the space is studied in \cite{R} under a different name. A systematic account of the theory can be found in \cite{AB1}.
\cite{Wong1} and \cite{Wong2} give several important results regarding order topology in the terminology of \cite{Schaefer}. The recent paper \cite{K} carries a similar title, but its content is very different from ours and no overlap seems to exist.

\section{Order bounded subsets of locally solid Riesz spaces}
Recall that a subset in a Riesz space is said to be \emph{order bounded} if it is contained in some order interval. Clearly, a subset of an ordered bounded set is order bounded. The next theorem gives some basic properties of order bounded sets.
\begin{lemma}\label{lemma2.1}
Let $L$ be a Riesz space. Then the following statements hold.
\begin{enumerate}
  \item [(i)]An arbitrary intersection of ordered bounded sets is order bounded.
  \item [(ii)]A finite union of ordered bounded sets is order bounded.
  \item [(iii)]The algebraic sum of two ordered bounded sets is order bounded.
  \item [(iv)]A nonzero multiple of an order bounded set is order bounded.
   \item [(v)]If $A$ is an ideal in $L$ and $T :L\rightarrow L/A$ is the canonical projection, then
              $T$ maps an order bounded set to an order bounded set, i.e., $T$ is order bounded.
\end{enumerate}
\end{lemma}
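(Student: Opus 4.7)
The plan is to handle each item by exhibiting an explicit order interval containing the target set, using only axioms (a) and (b) of the compatibility of the order with the vector operations together with the lattice structure. Each part is essentially a one-line manipulation of order intervals, and the proofs are independent of one another.

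For (i), assuming the family $\{A_i\}_{i\in I}$ is nonempty and each $A_i\subset[a_i,b_i]$, one fixes any $i_0\in I$ and observes $\bigcap_{i\in I}A_i\subset A_{i_0}\subset[a_{i_0},b_{i_0}]$. For (ii), induction reduces to two sets: if $A_k\subset[a_k,b_k]$ for $k=1,2$, then $A_1\cup A_2\subset[a_1\wedge a_2,\,b_1\vee b_2]$, because any $x\in A_k$ satisfies $a_1\wedge a_2\le a_k\le x\le b_k\le b_1\vee b_2$. For (iii), additivity of the order (axiom (a) applied twice) immediately yields $A_1+A_2\subset[a_1+a_2,\,b_1+b_2]$.

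For (iv), I would split on the sign of $\lambda$. When $\lambda>0$, axiom (b) gives $\lambda A\subset[\lambda a,\lambda b]$; when $\lambda<0$, writing $\lambda=-|\lambda|$ shows that multiplication by $\lambda$ reverses the order, so $\lambda A\subset[\lambda b,\lambda a]$. In either case the image sits in a legitimate order interval. For (v), the key input is the standard fact (available in \cite{AB1}) that when $A$ is an ideal, the quotient $L/A$ carries a canonical Riesz space structure under which the projection $T$ is a Riesz homomorphism, hence in particular positive and linear. Applying $T$ to $[x,y]\supset B$ then gives $T(B)\subset[T(x),T(y)]$ in $L/A$.

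I do not anticipate any real obstacle; the arguments are entirely routine manipulations with order intervals. The only slightly nontrivial point is (v), where one invokes—rather than reproves—the standard theorem that $L/A$ is a Riesz space and the quotient map is positive whenever $A$ is an ideal; with that theorem in hand, order boundedness of $T(B)$ is automatic from positivity and linearity of $T$.
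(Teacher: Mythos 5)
Your proposal is correct and matches the paper's approach: the paper dismisses (i)--(iv) as trivial (your explicit order-interval manipulations are exactly the intended routine arguments, including the order reversal for $\lambda<0$ in (iv)), and for (v) both you and the paper invoke that the canonical projection onto $L/A$ is a Riesz homomorphism, hence positive, hence order bounded. No gaps.
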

\begin{proof}
We only prove (v) as (i)-(iv) are trivial. Since $A$ is an ideal of $L$, the canonical projection $T: L\rightarrow L/A$ is a Riesz homomorphism. Thus, $T$ is a positive operator. The conclusion then follows from the fact that every positive operator is order bounded.
\end{proof}
\noindent \textbf{Remark.} An arbitrary union of ordered bounded sets need not be order bounded as the following example shows.

\begin{example}\label{example2.1}
Consider $R$ equipped with the usual ordering. Then $R$ is a Riesz space. For each positive integer $n$, the interval $[-n, n]$ is order bounded. However, $R=\cup_n [-n, n]$ is not order bounded.
\end{example}

Recall a classical result about order bounded subsets in a locally solid Riesz space (Theorem 2.19 of \cite{AB1}).
\begin{theorem}\label{theorem2.0}
If $(L, \tau)$ is a locally solid Riesz space, then an order bounded subset of $L$ is $\tau$-bounded.
\end{theorem}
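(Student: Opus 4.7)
The plan is to unwind the definitions. Let $A$ be an order bounded subset of $L$, so by definition there is some $u \in L_+$ with $A \subseteq [-u, u]$, i.e.\ $|x| \leq u$ for every $x \in A$. The goal is to show that for each $\tau$-neighborhood $V$ of zero there is $\lambda > 0$ with $\lambda A \subseteq V$.

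First I would use the locally solid hypothesis to replace $V$ by a smaller, solid neighborhood: pick a solid $\tau$-neighborhood $W$ of zero with $W \subseteq V$. Next, I would exploit the fact that $\tau$ is a vector topology, so scalar multiplication by real numbers is continuous at $0$; in particular the map $\lambda \mapsto \lambda u$ is continuous at $\lambda = 0$, which yields some $\lambda_0 > 0$ with $\lambda_0 u \in W$.

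Finally, for any $x \in A$ we have $|\lambda_0 x| = \lambda_0 |x| \leq \lambda_0 u$, so since $W$ is solid and contains $\lambda_0 u$, we get $\lambda_0 x \in W \subseteq V$. Hence $\lambda_0 A \subseteq V$, proving that $A$ is $\tau$-bounded.

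There is no real obstacle here; the only thing to be mindful of is to invoke local solidity \emph{before} picking $\lambda_0$, so that one can absorb the single element $u$ into a solid neighborhood and then transfer the absorption to every element of $A$ via the solidity inequality $|\lambda_0 x| \leq \lambda_0 u$. The continuity of scalar multiplication is what replaces any normedness assumption.
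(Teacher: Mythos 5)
Your proof is correct and is exactly the classical argument: absorb the dominating element $u$ into a solid neighborhood via continuity of scalar multiplication, then use solidity and $|\lambda_0 x|\leq \lambda_0 u$ to absorb all of $A$. The paper does not reprove this statement (it only cites Theorem 2.19 of Aliprantis--Burkinshaw), and your proof coincides with the standard one given there; the only cosmetic point is to note that $A\subseteq[a,b]$ yields $A\subseteq[-u,u]$ with $u=|a|\vee|b|$, which you implicitly use when passing to a symmetric order interval.
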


We point out that the converse of Theorem \ref{theorem2.0} does not hold, that is, there exists a topological Riesz space $(L, \tau)$ which is not locally solid but every order bounded subset of $L$ is $\tau$-bounded. The next example gives such a space.

\begin{example}\label{example2.0}
Let $K$ be a compact subset of $R$ and $L$ be the Riesz space of all continuous real-valued functions on $K$ under the ordering: $x\leq y$ if and only if $x(t)\leq y(t), \forall t\in K$. Equip $L$ with the supremum norm $||f||_{\infty}=\sup_{x\in K} |f(x)|$. Let $\tau$ be the norm topology generated by this norm. Then $(L, \tau)$ is a Banach lattice. Let $\sigma(L, L')$ be the weak topology on $L$. Since $L$ is infinite-dimensional, the topological Riesz space $(L, \sigma(L, L'))$ is not locally solid (Theorem 2.38 of \cite{AB1}). Since $(L, \tau)$ is normable, it is a Hausdorff locally convex space. Thus, $\tau$-bounded sets coincide with $\sigma(L, L')$-bounded sets in $L$ (Theorem 2.4 of \cite{AB1}). It follows that every order bounded sets in $L$ is $\sigma(L, L')$-bounded.
\end{example}

The next example shows that Theorem \ref{theorem2.0} is not true if the ordered topological vector space is not locally solid. It provides another example of a topological Riesz space which is not locally solid.

\begin{example}\label{example2.2}
Consider the ordered topological vector space $(L, \tau)$, where $L=R^2$ with the lexicographic ordering and $\tau$ is the usual topology. Take two points $x=(-1, 0)$ and $y=(1, 0)$ in $L$. Then the order interval $[x, y]$ is trivially order bounded. It is clear that $[x, y]$ contains uncountably many infinite vertical rays. Since $\tau$ is the usual topology, a $\tau$-bounded set cannot contain any infinite vertical ray. It follows that $[x,y]$ is not $\tau$-bounded. Note that $(L, \tau)$ is not locally solid.
\end{example}

It is natural to ask whether a $\tau$-bounded set is order bounded in a locally solid Riesz space $(L, \mathcal{T})$. The following example shows that the answer is negative even in a locally convex-solid Riesz space.

\begin{example}\label{example2.3}
Let $L$ be the space of all Lebesgue measurable functions on $R$ with the usual pointwise ordering, i.e., for $x, y\in L$, we define $x\leq y$ if and only if $x(t)\leq y(t)$ for every $t\in R$. Consider the map $||\cdot||: L\rightarrow R$ defined by $||x||=\left(\int x^2(t)dt\right)^{1/2}$, where $x\in L$. Then $||\cdot||$ is a seminorm. It is easy to see that it is also a Riesz seminorm. Thus, the topology $\tau$ generated by $||\cdot||$ is locally convex-solid (Theorem 2.25 of \cite{AB1}). Put $A=\{x\in L\mid ||x||\leq 1\}$. Then $A$ is evidently a $\tau$-bounded subset of $L$. We claim that $A$ is not order bounded. To see this, we proceed by contraposition as follows. If there exists $x, y\in L$ such that $A\subset [x, y]$, then take $z\in A$ and define
\begin{equation*}
\widetilde{z}=\left\{
                \begin{array}{ll}
                  z(t), & \hbox{if $t\neq 0$;} \\
                 y(0)+1, & \hbox{if $t=0$.}
                \end{array}
              \right.
\end{equation*}
It is clear that $\widetilde{z}\in A$ but $\widetilde{z}\not \in [x, y]$, contradicting the hypothesis that $A\subset [x, y]$.
\end{example}

In view of the above discussion, one would naturally like to seek a condition under which a $\tau$-bounded subset of $L$ will be order bounded. The next theorem gives such a condition.

\begin{theorem}\label{theorem2.2}
Suppose $(L, \tau)$ is an ordered topological vector space (in particular a locally solid Riesz space) that has an order bounded $\tau$-neighborhood of zero, then every $\tau$-bounded subset is order bounded.
\end{theorem}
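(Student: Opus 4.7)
The plan is to exploit the defining property of $\tau$-boundedness to scale the given $\tau$-bounded set into the order bounded neighborhood and then scale back using the fact that positive scalar multiplication preserves (and indeed transforms) order intervals.

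To set up, let $V$ be a $\tau$-neighborhood of zero which is order bounded, so there exist $a, b \in L$ with $V \subseteq [a, b]$. Let $A$ be an arbitrary $\tau$-bounded subset of $L$. By the definition of $\tau$-boundedness, there exists $\lambda > 0$ such that $\lambda A \subseteq V$, and hence $\lambda A \subseteq [a, b]$.

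From here the argument is immediate: for every $x \in A$ we have $a \leq \lambda x \leq b$, and since $\lambda > 0$, axiom (b) of an ordered vector space (applied with scalar $1/\lambda$) gives $a/\lambda \leq x \leq b/\lambda$. Therefore $A \subseteq [a/\lambda, b/\lambda]$, which is an order interval, proving that $A$ is order bounded.

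There is essentially no obstacle to overcome here; the result is almost definitional once one observes that order intervals transform in the expected way under positive scalars. The only care required is to write $\lambda A \subseteq V$ (rather than the other inclusion) and to remember that compatibility of the order with scalar multiplication requires the scalar $1/\lambda$ to be nonnegative, which is guaranteed by $\lambda > 0$.
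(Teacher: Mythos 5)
Your proof is correct and follows exactly the same route as the paper's: pick the order bounded neighborhood $V \subseteq [a,b]$, use $\tau$-boundedness to get $\lambda A \subseteq V$, and rescale the order interval. The only difference is that you spell out the final step $A \subseteq [a/\lambda, b/\lambda]$ explicitly, which the paper leaves as "it follows."
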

\begin{proof}
Let $\mathcal{N}$ denote the $\tau$-neighborhood base at zero. By hypothesis, there exists $V\in \mathcal{N}$ such that $V$ is contained in some order interval $[u, v]$ of $L$, where $u, v\in L$. Suppose $A$ is $\tau$-bounded subset of $L$. Then there exists $\lambda>0$ such that $\lambda A\subset V$. It follows that $A$ is contained in some order interval of $L$, that is, $A$ is order bounded.
\end{proof}

%\begin{corollary}
%Let $(L, \tau)$ be a locally convex-solid Riesz space. If $L$ has an order bounded $\tau$-neighborhood, then $L$ is seminormable.
%\end{corollary}

\noindent \textbf{Remark 1.} If the hypothesis of Theorem \ref{theorem2.2} holds for a locally solid Riesz space $(L, \tau)$, then $L$ is pseudometrizable as Theorem \ref{theorem2.3} shall show. However, even in a seminormable locally solid Riesz space $(L, \tau)$, a $\tau$-bounded set may not be order bounded. This is clear from Example \ref{example2.3}.\\

\noindent \textbf{Remark 2.} The converse of Theorem \ref{theorem2.2} is false even in a locally convex-solid Riesz space. In words, if $(L, \tau)$ is a locally convex-solid Riezs space where every $\tau$-bounded set is order bounded, $L$ may not possess an order bounded $\tau$-neighborhood of zero. The next example gives such a locally convex-solid space. \\

\begin{example}\label{example2.3.0}
Let $L$ be the space of all real-valued continuous functions defined on $R$ and $\tau$ be the compact-open topology. For every compact subset $K\subset R$, define $\rho_K(x)=\sup|x(K)|, x\in L$. Then $\tau$ is generated by the seminorms $\{\rho_K(x)\mid \ \text{$K$ is a compact subset of $R$}\}$; hence $\tau$ is locally convex. Define an order on $L$ by stipulating that $x\leq y$ if and only if $x(t)\leq y(t), \forall t\in R$. Clearly, $L$ is a locally convex-solid space. It is known that $(L, \tau)$ has no $\tau$-bounded neighborhood (Example 6.1.7 of \cite{NB}). Then Theorem \ref{theorem2.0} shows that $L$ cannot have an order bounded $\tau$-neighborhood of zero. On the other hand, a subset $E$ of $L$ is $\tau$-bounded if and only if $\rho_K(E)$ is bounded for every compact subset $K$ of $R$ (p. 109 of \cite{H1}). Thus, it is easy to see that every $\tau$-bounded set in $L$ is order bounded.
\end{example}

Recall that the \emph{diameter} of a subset $E$ of a pseudometric space $(L, d)$ is defined to be $diam (E)=\sup\{d(x, y)\mid x, y\in E\}$. We say $E$ has \emph{finite diameter} if $diam (E)<\infty$.
\begin{theorem}\label{theorem2.3}
Let $(L, \tau)$ be a (Hausdorff) locally solid Riesz space. Then the following statements hold.
\begin{enumerate}
  \item [(i)]If there exists an order bounded $\tau$-neighborhood of zero, then $L$ is (metrizable) pseudometrizable. In this case, every order bounded subset of $L$ has finite diameter with respect to every compatible (metric) pseudometric $d$.
  \item [(ii)]If there exists an order bounded convex $\tau$-neighborhood of zero, then $L$ is (normable) seminormable. In this case, every order bound subset of $L$ has finite diameter with respect to every compatible (norm) seminorm $\rho$.
  \item [(iii)]If $E$ is a subset of $L$ and every countable subset of $E$ is order bounded, then $E$ is $\tau$-bounded.
  \item [(iv)]Every order bounded subspace of $L$ is contained in the $\tau$-closure of zero.
  \item[(v)]If $(M, \tau')$ is another ordered topological vector space and $T$ is a topologically continuous linear operator from $L$ to $M$, then $T$ maps an ordered-bounded set to a $\tau'$-bounded set. In particular, if $L$ also has an order bounded $\tau$-neighborhood, then $T$ is order bounded.
\end{enumerate}
\end{theorem}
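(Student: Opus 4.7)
The plan is to dispatch all five parts by coupling the two pivotal tools already proved, Theorem \ref{theorem2.0} and Theorem \ref{theorem2.2}, with standard topological vector space machinery. In every part the workhorse is the fact that in a locally solid space an order bounded subset is $\tau$-bounded; in several parts the converse direction from Theorem \ref{theorem2.2} is also invoked. The Hausdorff refinements in (i) and (ii) drop out from the general facts that a Hausdorff pseudometrizable topological vector space is metrizable and a Hausdorff seminormable one is normable.

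For (i), let $V \subset [u,v]$ be the given order bounded $\tau$-neighborhood of zero. By Theorem \ref{theorem2.0}, $V$ is $\tau$-bounded, so $\{n^{-1} V : n \in \mathbb{N}\}$ is a countable $\tau$-neighborhood base at zero; this is the classical route to a compatible translation-invariant pseudometric $d$ on $L$, showing that $L$ is pseudometrizable. An order bounded $E$ is $\tau$-bounded (Theorem \ref{theorem2.0}), hence $E \subset \lambda V$ for some $\lambda > 0$, and therefore has finite $d$-diameter. Part (ii) runs in parallel but replaces the pseudometrizability step by Kolmogorov's seminormability theorem: a convex $\tau$-bounded neighborhood of zero yields a Riesz seminorm generating $\tau$, which then bounds the $\rho$-diameter of any order bounded set. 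In the Hausdorff setting, $d$ and $\rho$ are a genuine metric and norm, respectively.

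For (iii) I argue by contraposition: if $E$ fails to be $\tau$-bounded, pick a balanced $\tau$-neighborhood $U$ of zero and points $x_n \in E$ with $n^{-1} x_n \notin U$; the countable subset $\{x_n\}$ is order bounded by hypothesis, hence $\tau$-bounded by Theorem \ref{theorem2.0}, forcing $n^{-1} x_n \to 0$ and producing the contradiction. For (iv), an order bounded subspace $N$ is $\tau$-bounded by Theorem \ref{theorem2.0}; for $x \in N$ and any balanced $\tau$-neighborhood $U$ of zero, $\tau$-boundedness of $N$ yields $\mu > 0$ with $\mu N \subset U$, and since $nx \in N$ for every $n \in \mathbb{N}$, we obtain $x \in (\mu n)^{-1} U \subset U$ as soon as $\mu n \geq 1$; hence $x \in \overline{\{0\}}$. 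For (v), the first clause combines Theorem \ref{theorem2.0} with the routine fact that topologically continuous linear operators preserve topological boundedness, and the second clause follows from a further application of Theorem \ref{theorem2.2}, which converts $\tau'$-boundedness of $T(E)$ back into order boundedness.

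The most delicate step I expect is the finite-diameter claim in (i) and (ii) for an \emph{arbitrary} compatible pseudometric or seminorm, since $\tau$-boundedness does not automatically coincide with $d$-boundedness for every compatible pseudometric on a general TVS. The resolution will rest on translation invariance of the $F$-norm produced by the base $\{n^{-1}V\}$ together with a careful scaling argument that exploits the fact that $V$ itself is $\tau$-bounded.
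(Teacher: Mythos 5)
Your proposal follows essentially the same route as the paper: each part is reduced via Theorem \ref{theorem2.0} to $\tau$-boundedness and then finished by standard topological-vector-space facts (a bounded neighborhood of zero gives pseudometrizability, Kolmogorov's criterion gives seminormability, continuous linear maps preserve topological boundedness), with Theorem \ref{theorem2.2} supplying the converse direction in (v); you merely spell out details the paper leaves implicit. One remark: in the final clause of (v) you reproduce the paper's own leap, since converting $\tau'$-boundedness of $T(E)$ into order boundedness requires applying Theorem \ref{theorem2.2} to $(M,\tau')$, i.e.\ an order bounded $\tau'$-neighborhood of zero in $M$, whereas the stated hypothesis only provides one in $L$.
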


\begin{proof}\noindent
\begin{enumerate}
  \item [(i)]Since $L$ is a (Hausdorff) locally solid Riesz space, every order bounded set in $L$ must be $\tau$-bounded.
  It follows that an order bounded neighborhood of zero is a $\tau$-bounded neighborhood of zero.
  Thus, $L$ has $\tau$-bounded neighborhood of zero; hence it is pseudometrizable.
   The second part follows from the fact that a topologically bounded set in a pseudometric topological vector space has finite diameter.
  \item [(ii)]The proof is analogous to that of (i).
  \item [(iii)]By hypothesis, every countable subset of $E$ is $\tau$-bounded. Thus $E$ is $\tau$-bounded too.
  \item [(iv)]Let $E$ be an order bounded subspace of $L$. Then $E$ is $\tau$-bounded. Therefore, it is contained in the $\tau$-closure of zero.

  \item[(v)]The continuity and linearity of $T$ imply that $T$ maps a $\tau$-bounded set to a $\tau'$-bounded set. Since an ordered-bounded set in $L$ is $\tau$-bounded, $T$ maps  ordered-bounded sets to $\tau'$-bounded sets. If in addition $L$ has an order bounded $\tau$-neighborhood, Theorem \ref{theorem2.2} shows that a $\tau'$-bounded set is order bounded in $M$. It follows that $T$ is order bounded.
\end{enumerate}
\end{proof}

\begin{corollary}\label{corollarhy2.2}
Let $(L, \tau)$ be a Hausdorff locally solid Riesz space. Then every order bounded subspace of $L$ is trivial.
\end{corollary}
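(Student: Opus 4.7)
The plan is to deduce this directly from part (iv) of Theorem \ref{theorem2.3}, which is already proved. That part says that every order bounded subspace of $L$ is contained in the $\tau$-closure of $\{0\}$. So if I can identify the $\tau$-closure of $\{0\}$ under the Hausdorff hypothesis, the corollary follows immediately.

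First I would recall the standard fact that in a Hausdorff topological vector space, singletons are closed; in particular $\overline{\{0\}} = \{0\}$. (One can see this either from the general fact that Hausdorff spaces have closed points, or more concretely by noting that if $x \neq 0$ then, by the Hausdorff property, there is a $\tau$-neighborhood of $x$ disjoint from some neighborhood of $0$, hence $x \notin \overline{\{0\}}$.)

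Next, let $E$ be an order bounded subspace of $L$. By Theorem \ref{theorem2.3}(iv) we have $E \subseteq \overline{\{0\}}$, and by the previous step $\overline{\{0\}} = \{0\}$. Therefore $E = \{0\}$, i.e.\ $E$ is the trivial subspace.

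I do not anticipate any real obstacle: the only content beyond a one-line appeal to Theorem \ref{theorem2.3}(iv) is the observation that the Hausdorff assumption collapses the closure of zero to $\{0\}$. The result should be stated essentially as a two-sentence corollary, and its role is mainly to record the clean consequence that in the Hausdorff setting the only way a subspace can be squeezed inside an order interval is for it to be zero.
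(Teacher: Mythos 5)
Your proof is correct and follows exactly the route the paper intends: the corollary is left without proof precisely because it is the immediate combination of Theorem \ref{theorem2.3}(iv) with the standard fact that $\overline{\{0\}}=\{0\}$ in a Hausdorff topological vector space. Nothing further is needed.
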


\noindent \textbf{Remark 1.} The converse of (i) need not hold, that is, a pseudometrizable locally solid Riesz space may not possess an order bounded $\tau$-neighborhood of zero.
Indeed, even the extra assumption ``locally convex'' may not help. Example \ref{example2.3} illustrates this point. The point is that seminormability guarantees the existence of a $\tau$-bounded neighborhood of zero. However, such a $\tau$-bounded neighborhood need not be order bounded.\\

\noindent \textbf{Remark 2.} The converse of (v) is not true in general. Consider the following example.

\begin{example}\label{example2.8}
Let $L=M$ be the Hilbert space $l^2$ of square-summable sequences, $\tau'$ be the norm topology, and $\tau$ be the weak topology $\sigma(L, L')$, where $L'$ denotes the topological dual of $L$. Equip $M$ and $L$ with the usual pointwise ordering, that is, for $x, y\in L=M$, $x\leq y$ if and only if $x_n\leq y_n$ for all $n\geq 1$. Let $T=I$ be the identity map. Then $T$ is clearly order bounded. However, $T$ is not topologically continuous. To see this, let $<\cdot, \cdot>$ denote the inner product on $L$ and $e_n$ denote the $n$-th unit coordinate vector, i.e., $e_n\in l^2$, $n$-th coordinate of $e_n$ is $1$, and every other coordinate is zero. Then for every $f\in L'$, Riesz Representation Theorem implies that there exists $x_0\in L$ such that $f(e_n)=\langle e_n, x_0\rangle$. It follows from Bessel's inequality that the sequence $(f(e_n))$ converges to $0$, implying that the sequence $(e_n)$ converges to $0$ in the weak topology. But the sequence $(e_n)$ does not converge in the norm topology. This shows that $T$ is not topologically continuous.
\end{example}

However, we have the following partial converse of (v).

\begin{theorem}\label{theorem2.4}
Let $(L, \tau)$ and $(M, \tau')$ be two locally solid Riesz spaces and $T: L\rightarrow M$ be a positive homogeneous operator. If there exists a $\tau$-neighborhood $V$ of zero such that $T(V)$ is order bounded, then $T$ is topologically continuous.
\end{theorem}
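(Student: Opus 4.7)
The plan is to translate the order boundedness of $T(V)$ into $\tau'$-boundedness by invoking Theorem \ref{theorem2.0} on the target space, and then to push the resulting absorption back through $T$ using positive homogeneity in order to obtain continuity at the origin.

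Concretely, I would first apply Theorem \ref{theorem2.0} in $(M,\tau')$: since $(M,\tau')$ is locally solid and $T(V)$ is order bounded, $T(V)$ is $\tau'$-bounded. Now let $W$ be an arbitrary $\tau'$-neighborhood of zero in $M$. The $\tau'$-boundedness of $T(V)$ yields some $\mu>0$ with $\mu\, T(V)\subseteq W$. Positive homogeneity of $T$ supplies the identity $\mu\, T(V)=T(\mu V)$, so $T(\mu V)\subseteq W$. Because $\mu V$ is itself a $\tau$-neighborhood of zero in $L$, this shows that the preimage of every $\tau'$-neighborhood of zero under $T$ contains a $\tau$-neighborhood of zero, which is continuity of $T$ at the origin.

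I do not expect a genuine obstacle here. The only nontrivial algebraic step is the identity $\mu\, T(V)=T(\mu V)$ for $\mu>0$, which is exactly what positive homogeneity delivers, and this is precisely why the hypothesis is phrased in terms of positive homogeneity rather than full linearity. Everything else is a short chain through Theorem \ref{theorem2.0}, the definition of $\tau'$-boundedness, and the fact that positive scalar multiples of neighborhoods of zero are again neighborhoods of zero in any topological vector space. If a global form of topological continuity (at every point of $L$) is intended, the natural final step would be to invoke additivity or the standard reduction from continuity at $0$ to continuity everywhere.
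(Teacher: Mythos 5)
Your proposal is correct and follows essentially the same route as the paper's own proof: invoke Theorem \ref{theorem2.0} in $(M,\tau')$ to convert order boundedness of $T(V)$ into $\tau'$-boundedness, then use the absorption property of bounded sets together with the identity $\lambda T(V)=T(\lambda V)$ from positive homogeneity to get continuity at the origin. Your closing remark about passing from continuity at zero to continuity everywhere via additivity is a detail the paper leaves implicit, but there is no substantive difference between the two arguments.
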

\begin{proof}
Since $(M, \tau')$ is locally solid, $T(V)$ is $\tau'$-bounded by hypothesis.
Thus, for every $\tau'$-neighborhood $U$ there exists some $\lambda>0$ such that $\lambda T(V)\subset U$. It follows from the positive homogeneity of $T$ that $T(\lambda V)\subset U$. Since $U$ is arbitrary, this shows that $T$ is continuous.
\end{proof}

\begin{corollary}\label{corollary2.3}
Let $(L, \tau)$ be a locally solid Riesz space with an order bounded $\tau$-neighborhood, $(M, \tau')$ be a locally solid Riesz space, and $T$ be a positive homogeneous map that maps $\tau$-bounded sets to order bounded sets. Then $T$ is topologically continuous.
\end{corollary}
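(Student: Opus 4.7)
The plan is to reduce the corollary to Theorem \ref{theorem2.4} by producing a single $\tau$-neighborhood of zero whose image under $T$ is order bounded, and then quoting that theorem.

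First, I would invoke the hypothesis that $(L,\tau)$ has an order bounded $\tau$-neighborhood of zero to fix such a neighborhood $V$. Since $(L,\tau)$ is locally solid, Theorem \ref{theorem2.0} applies and tells us that this order bounded set $V$ is in fact $\tau$-bounded. Next, I would apply the standing hypothesis on $T$ — that it sends $\tau$-bounded sets to order bounded sets — to the set $V$, obtaining that $T(V)$ is order bounded in $M$.

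At this point the situation is precisely the setup of Theorem \ref{theorem2.4}: we have locally solid Riesz spaces $(L,\tau)$ and $(M,\tau')$, a positive homogeneous operator $T\colon L\to M$, and a $\tau$-neighborhood $V$ of zero whose image $T(V)$ is order bounded. Applying Theorem \ref{theorem2.4} directly yields the topological continuity of $T$, completing the argument.

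I do not foresee a genuine obstacle here; the corollary is essentially a packaging of the previous two results. The only thing one needs to be careful about is that the order bounded neighborhood supplied by the hypothesis on $L$ is used twice in its roles — first as an order bounded set (to invoke Theorem \ref{theorem2.0} and get $\tau$-boundedness) and then as a $\tau$-bounded set (to feed into the hypothesis on $T$) — but these are the same set $V$, so no matching or uniformity issue arises.
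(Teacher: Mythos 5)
Your argument is correct and is exactly the intended derivation: the paper states this corollary without proof immediately after Theorem \ref{theorem2.4}, and the evident route is the one you take (the order bounded neighborhood $V$ is $\tau$-bounded by Theorem \ref{theorem2.0}, so $T(V)$ is order bounded by hypothesis, and Theorem \ref{theorem2.4} applies). No gaps.
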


\begin{theorem}\label{theorem2.5}
Let $(L, \tau)$ and $(M, \tau')$ be two locally solid Riesz spaces. If $L$ has an order bounded $\tau$-neighborhood of zero and $M$ has an order bounded $\tau'$-neighborhood of zero, then every order bounded operator $T$ is topologically continuous.
\end{theorem}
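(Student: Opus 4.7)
The plan is to reduce the statement to the already-established Theorem \ref{theorem2.4}. By the hypothesis on $L$, there exists a $\tau$-neighborhood $V$ of zero which is order bounded, say $V\subset[u,v]$ for some $u,v\in L$. Since $T$ is order bounded as a linear operator, it carries the order bounded set $V$ to an order bounded set $T(V)\subset M$ (equivalently, $T(V)\subset T([u,v])$, which is order bounded by definition of an order bounded operator). Because $T$, being linear, is in particular positive homogeneous, the hypotheses of Theorem \ref{theorem2.4} are satisfied for this very choice of $V$, and we immediately conclude that $T$ is topologically continuous.

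The only substantive verification is therefore that $T(V)$ is order bounded, which is direct. The second hypothesis, namely that $M$ possesses an order bounded $\tau'$-neighborhood of zero, does not appear to be needed along this route; it is presumably retained for symmetry with Theorem \ref{theorem2.2} (so that, via Theorem \ref{theorem2.2} applied in $M$, one automatically recovers the stronger statement that $T$ sends $\tau$-bounded sets to $\tau'$-bounded sets) or simply to place the result in the ``correct'' context where both spaces sit under the assumptions of this section. The main obstacle, if any, is thus conceptual rather than technical: one has to recognize that the earlier partial converse of item (v) of Theorem \ref{theorem2.3} already encodes exactly the implication we need, once the order bounded neighborhood of $L$ is plugged in as the distinguished neighborhood $V$.
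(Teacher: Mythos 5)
Your proof is correct, but it follows a genuinely different route from the paper's. The paper argues via boundedness and metrizability: it applies Theorem \ref{theorem2.2} to $L$ to turn $\tau$-bounded sets into order bounded ones, pushes these through $T$ and then through Theorem \ref{theorem2.0} in $M$ to conclude that $T$ maps $\tau$-bounded sets to $\tau'$-bounded sets, and finally invokes the pseudometrizability of $L$ (Theorem \ref{theorem2.3}(i)) together with the standard fact that a bounded linear operator on a pseudometrizable topological vector space is continuous. You instead feed the order bounded neighborhood $V$ directly into Theorem \ref{theorem2.4}: $T(V)$ is order bounded because $T$ is an order bounded operator, and $T$, being linear, is positive homogeneous, so Theorem \ref{theorem2.4} applies verbatim. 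Your route is shorter and more self-contained, since Theorem \ref{theorem2.4} has an elementary direct proof and you avoid the external fact about bounded operators on pseudometrizable spaces; the paper's route has the side benefit of exhibiting $T$ as a bounded operator along the way. Your observation that the hypothesis on $M$ is not needed is also accurate --- the paper's own proof does not use it either; only the local solidness of $M$ enters (it is what makes $T(V)$ $\tau'$-bounded inside Theorem \ref{theorem2.4}).
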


\begin{proof}
Let $T : L\rightarrow M$ be an order bounded operator. By hypothesis and Theorem \ref{theorem2.2}, $T$ maps $\tau$-bounded sets in $L$ to $\tau'$-bounded sets in $M$. Since $L$ is pseudometrizable by Theorem \ref{theorem2.3}, $T$ is topologically continuous.
\end{proof}

Suppose $(L_i, \tau_{i}), (i=1, 2)$ are two ordered topological vector spaces and $T: L_1\rightarrow L_2$ is a linear operator. $T$ need not be order bounded even if it maps $\tau_1$-bounded sets to $\tau_2$-bounded set.
Consider the following example.

\begin{example}\label{example2.9}
Let $L_1$ be the locally convex-solid Riesz space in Example \ref{example2.3} and $L_2$ be $R$ with the usual topology and the usual ordering. Consider the linear operator $T$ from $(L_1, \tau_1)$ to $(L_2, \tau_2)$ given by $x\mapsto \int x(t)dt$. It is easy to see from Cauchy-Schwartz inequality that $T$ maps $\tau_1$-bounded sets to $\tau_2$-bounded sets. However, $T$ is not order bounded. To see this, consider the image of the order interval $[x, y]$ under $T$, where $x(t)\equiv 0$ and $y(t)\equiv 1$. Suppose $T([x, y])$ is order bounded. Then there exist $a, b\in R$ such that $T([x, y])\in [a, b]$. Take a positive integer $n$ such that $0<\frac{b}{n}\leq \frac{1}{2}$. Put
\begin{equation*}
z(t)=\left\{
       \begin{array}{ll}
         \frac{b}{n}, & \hbox{$0\leq t\leq n$;} \\
         1, & \hbox{$n<t\leq 2n$;} \\
         0, & \hbox{otherwise.}
       \end{array}
     \right.
\end{equation*}
Then $z$ belongs to the order interval $[x, y]$. But $T(z)\not \in [a, b]$, contradicting the hypothesis that $T([x, y])\in [a, b]$.
\end{example}

On the other hand, an order bounded operator $T$ between two ordered topological vector spaces need not map $\tau_1$-bounded sets to $\tau_2$-bounded sets. Consider the following example.

\begin{example}\label{example2.10}
Take $L_1=L_2=R^2$ with $\tau_1=\tau_2$ being the usual topology on $R^2$. Equip $L_1$ with the usual ordering and $L_2$ with the lexicographic ordering, respectively. Consider $T=I$, the identity map. It is clear that $T$ is order bounded. However, $T$ does not map $\tau_1$-bounded sets to $\tau_2$-bounded sets. To see this, take $A=[x, y]\subset L_1$, where $x=(-1, -1)$ and $y=(1, 1)$. Then $A$ is a bounded square in $L_1$; hence $A$ is $\tau_1$-bounded. However, $T(A)=A$ contains uncountably many infinite vertical rays; hence $T(A)$ cannot be $\tau_2$-bounded.
\end{example}

We conclude this section with an interesting result regarding the product of locally solid Riesz spaces.

\begin{theorem}\label{theorem2.6}
Let $\{(L_{\alpha}, \tau_{\alpha})\}_{\alpha\in A}$ be a family of nontrivial locally convex-solid spaces, where $A$ is an infinite set. Equip $L=\prod_{\alpha\in A}L_{\alpha}$ with the product topology $\tau$. If $L_{\alpha}$ has an order bounded $\tau_{\alpha}$ neighborhood of zero for all $\alpha\in A$, then $L$ has no order bounded $\tau$-neighborhood of zero.
\end{theorem}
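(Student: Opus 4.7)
The plan is to argue by contradiction, exploiting only the structure of the product topology together with one elementary fact about ordered vector spaces. Suppose $W$ is an order bounded $\tau$-neighborhood of zero in $L$, so that $W\subset [u,v]$ for some $u=(u_{\alpha})$, $v=(v_{\alpha})\in L$.

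First I would invoke the definition of the product topology to locate, inside $W$, a basic neighborhood of the form $U=\prod_{\alpha\in A}U_{\alpha}$, where each $U_{\alpha}$ is a $\tau_{\alpha}$-neighborhood of zero in $L_{\alpha}$ and $U_{\alpha}=L_{\alpha}$ for every $\alpha$ outside some finite set $F\subset A$. Since $L$ carries the coordinatewise order, the containment $U\subset [u,v]$ immediately yields $U_{\alpha}\subset [u_{\alpha},v_{\alpha}]$ for every $\alpha$. Then I would use the hypothesis that $A$ is infinite: because $F$ is finite, there is some $\alpha_0\in A\setminus F$, and for this index $U_{\alpha_0}=L_{\alpha_0}$, so $L_{\alpha_0}\subset [u_{\alpha_0},v_{\alpha_0}]$; in other words, $L_{\alpha_0}$ would be order bounded as a subset of itself.

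The decisive, and essentially only nontrivial, step is to rule out this last possibility for a nontrivial ordered vector space. Concretely, I would establish that an ordered vector space $X$ with $X=[a,b]$ must be trivial: for any $x\in X$ the element $b+x$ again lies in $X$, so $b+x\leq b$, giving $x\leq 0$; applying the same reasoning to $-x$ yields $x\geq 0$, and hence $x=0$. Since each $L_{\alpha}$ is nontrivial by hypothesis, this contradicts the preceding conclusion and finishes the proof.

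The hard part, if one can call it that, is simply recognizing this elementary order-theoretic fact; everything else is a routine projection argument in a product. It is worth pointing out that neither the locally convex-solid structure nor the assumed existence of an order bounded $\tau_{\alpha}$-neighborhood in each factor plays any role in the argument itself; those hypotheses serve instead to guarantee that each factor individually possesses exactly the property that the product is shown to lack, so that the conclusion is substantive rather than vacuous.
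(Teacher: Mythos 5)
Your proof is correct, but it takes a genuinely different route from the paper's. The paper argues through seminormability: each factor has an order bounded, hence (by local solidity) topologically bounded, hence (by local convexity) convex bounded neighborhood of zero, so each $L_{\alpha}$ is seminormable by Kolmogorov's criterion; an infinite product of nontrivial spaces is not seminormable and therefore has no topologically bounded neighborhood of zero, and since order bounded sets in the (locally solid) product are $\tau$-bounded, no order bounded neighborhood can exist. You instead work directly with a basic product neighborhood $\prod U_{\alpha}$, project onto a coordinate outside the finite exceptional set, and reduce everything to the purely order-theoretic lemma that a nontrivial ordered vector space cannot be order bounded in itself ($b+x\leq b$ forces $x\leq 0$, and likewise for $-x$). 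This is a topology-free sharpening of the paper's Corollary \ref{corollarhy2.2}, which there requires Hausdorffness and local solidity. Your observation that neither local solidity, nor local convexity, nor the hypothesis on the factors enters the argument is accurate and worth recording: you have in fact proved the stronger statement that an infinite product of nontrivial ordered topological vector spaces (with the coordinatewise order) never has an order bounded neighborhood of zero, whereas the paper's route genuinely consumes those hypotheses. What the paper's proof buys in exchange is that it stays within the machinery already developed (Theorems \ref{theorem2.0} and \ref{theorem2.3}), and it makes visible the contrast the theorem is really about --- each factor has a bounded neighborhood but the product does not. Two small points you should make explicit: that the order on $L$ is the coordinatewise one (the paper leaves this implicit as well, but your projection step $U_{\alpha}\subset[u_{\alpha},v_{\alpha}]$ depends on it), and that this step uses $0\in U_{\beta}$ for all $\beta$ so that $\pi_{\alpha}(U)=U_{\alpha}$.
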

\begin{proof}
By hypothesis, each $L_{\alpha}$ has an $\tau_{\alpha}$-bounded convex neighborhood of zero. Thus, $L_{\alpha}$ is seminormable. Since $A$ is an infinite set, $L$ is not seminormable; hence it has no topologically bounded neighborhood of zero in the product topology. It follows from Theorem \ref{theorem2.3} that $L$ has no order bounded neighborhood of zero.
\end{proof}

\noindent \textbf{Remark.} In view of Theorem \ref{theorem2.6}, one may raise the question: Is it possible for $L=\prod L_{\alpha\in A}$ to have an order bounded $\tau$-neighborhood at zero while some $L_{\alpha}$ does not possesses an order bounded  $\tau_{\alpha}$ neighborhood of zero? The answer is negative. To see this, we proceed by contraposition to suppose that $L$ has an order bounded $\tau$-neighborhood $V=\prod_{\alpha\in A} V_{\alpha}$ of zero. Then $V$ is $\tau$-bounded by Theorem \ref{theorem2.0}; hence $L$ is seminormable. On the other hand, let $\pi_{\alpha}$ be the projection from $L$ to $L_{\alpha}$. Then $V_{\alpha}=\pi_{\alpha}(V)$ is $\tau_{\alpha}$-bounded for each $\alpha\in A$ (p. 118 of \cite{H1}). It follows that each $L_{\alpha}$ is seminormable. Thus, $L$ cannot be seminormable.\\

Topological bornologies have been extensively studied in the theory of topological vector spaces. Readers may wonder whether there is a notion of order bornology in a topological Riesz space. The answer is affirmative. Interested readers may consult \cite{BV1} and \cite{BV2} for further details.

\section*{Acknowledgments}

\noindent Thanks are due to the anonymous referee for comments which led to several improvements in this article.
The author also thanks Gerard Buskes for pointing out some references and his encouragement.

%Send figures/illustrations as eps files (each figure/illustration in a separate file).

%They can be inserted in the following way:

%\begin{figure}[htb]
%\includegraphisc[width=99mm]{filename.eps}
%\caption{}
%\end{figure}

%A commutative diagram can be inserted as follows:
%\[
%\begin{CD}
%TQ @>{\kappa}>> T^*Q \\
%@V{g\diamond}VV  @VV{g\cdot}V \\
%TQ @>>{\kappa}> T^*Q \\
%\end{CD}
%\]

\bibliographystyle{amsplain}

\end{document}